\newcommand{\Sph}{\mathbb S}
\newcommand{\cH}{\mathcal H}
\newcommand{\bbbr}{\mathbb R}
\newcommand{\bbbq}{\mathbb Q}
\newcommand{\eps}{\varepsilon}
\def\unp{\operatorname{Unp}}
\def\dist{\operatorname{dist}}
\newtheorem{theorem}{Theorem}
\newtheorem*{theorem*}{Theorem}
\newtheorem{lemma}[theorem]{Lemma}
\newtheorem{proposition}[theorem]{Proposition}
\theoremstyle{definition}
\newtheorem{remark}[theorem]{Remark}
\def\mvint_#1{\mathchoice
          {\mathop{\vrule width 6pt height 3 pt depth -2.5pt
                  \kern -9pt \intop}\limits_{\kern -3pt #1}}%
          {\mathop{\vrule width 5pt height 3 pt depth -2.6pt
                  \kern -6pt \intop}\nolimits_{#1}}%
          {\mathop{\vrule width 5pt height 3 pt depth -2.6pt
                  \kern -6pt \intop}\nolimits_{#1}}%
          {\mathop{\vrule width 5pt height 3 pt depth -2.6pt
                  \kern -6pt \intop}\nolimits_{#1}}}
\title[Ambiguous locus]{On an old theorem of Erd\"os about ambiguous locus}
\author{Piotr Haj\l{}asz}
\address{Piotr Haj\l{}asz,\newline \indent Department of Mathematics, University of Pittsburgh, \newline \indent 301 Thackeray Hall, Pittsburgh,
Pennsylvania 15260}
\email{hajlasz@pitt.edu}
\thanks{The author was supported by the NSF grant DMS-2055171.}
\subjclass[2020]{26B25, 28A75, 49J52}
\keywords{Convex functions; distance function; ambiguous locus; medial axis; metric projection; Hausdorff measure}
\begin{document}

\sloppy

\begin{abstract}
Erd\"os proved in 1946 that if a set $E\subset\bbbr^n$ is closed and non-empty, then the set, called ambiguous locus or medial axis, of points in $\bbbr^n$ with the property that the nearest point in $E$ is not unique, can be covered by countably many surfaces, each of finite  $(n-1)$-dimensional measure. We improve the result by obtaining a new regularity result for these surfaces in terms of convexity and $C^2$ regularity. 
\end{abstract}

\maketitle

Given a closed set $\varnothing\neq E\subset\bbbr^n$, let $\unp(E)$ be the set of all points $x\in\bbbr^n$ for which there is a unique point $y\in E$ nearest to $x$.
Clearly $E\subset\unp(E)$.
If we denote this nearest point by $\pi(x):=y$, the mapping $\pi:\unp(E)\to E$ is called the {\em metric projection}. In order to understand the properties of this mapping it is important to understand the structure of the set $\bbbr^n\setminus\unp(E)\subset\bbbr^n\setminus E$, where we lack uniqueness of the metric projection. This set is often called the {\em ambiguous locus} of the metric projection. It is also called the {\em medial axis} or the {\em skeleton} of $\bbbr^n\setminus E$.
Zamfirescu \cite{zam} (see also \cite[4A]{fremlin}, \cite{zhivkov}) proved that for most compact sets $\varnothing\neq E\subset\bbbr^n$ (in the Baire category sense with respect to the Hausdorff distance on the spaces on compact subsets of $\bbbr^n$), the set $\unp(E)$ has empty interior, meaning that the set of points in $\bbbr^n$ without a unique nearest point in $E$ is dense.  
On the other hand, it is known that the Lebesgue measure of the set $\mathbb{R}^n\setminus\unp(E)$ equals zero, $|\mathbb{R}^n\setminus\unp(E)|=0$.
This result is due to Erd\"os \cite{erdos2}. For a simple folklore proof (different from that in \cite{erdos2}), see Lemma~\ref{T2} and Remark~\ref{rem2} below.

Erd\"os \cite{erdos}, proved however, a much stronger result: {\em The set $\mathbb{R}^n\setminus\unp(E)$ is contained in the sum of countably many surfaces of finite $(n-1)$-dimensional measure.} His proof is based on Roger's \cite{roger} proof of the {\em contingent theorem} (see also \cite[pp. 264-266 and 304-307]{saks} and \cite[Section~2.1.8]{mat}). 
Fifty years later, Erd\"os' result was rediscovered by Fremlin \cite[Theorem~1G]{fremlin} with a different proof, but the author was not aware of the work of Erd\"os.
For more results about the structure of the set $\mathbb{R}^n\setminus\unp(E)$, see \cite{ac} and references therein. Another interesting and related reference is \cite{BH}.

In Theorem~\ref{T1} which is the main result of the paper, we substantially improve Erd\"os' result by showing convexity and $C^2$ regularity properties of the surfaces covering the set $\bbbr^n\setminus\unp(E)$. Our argument is different from that of Erd\"os. 
To the best of my knowledge, Theorem~\ref{T1} is new. While, it can be easily deduced from the results existing in the literature (as we do it here), I believe, the result is of a substantial interest and it deserves a clear, self-contained, and easy to read proof. In addition to Theorem~\ref{T1}, I believe, the proof of Theorem~\ref{T10} is of independent interest as I explained it in Remark~\ref{rem}.

We say that the set 
\begin{equation}
\label{eq12}
G=\{x\in\bbbr^n:\, x_i=f(x_1,\ldots,x_{i-1},x_{i+1},\ldots,x_n)\},
\end{equation}
where $1\leq i\leq n$ and $f:\bbbr^{n-1}\to\bbbr$ is continuous, is a  {\em $C^2$-graph} if $f\in C^2$ and it is a $(c-c)$-graph if $f=g-h$ is the difference of convex functions $g,h:\bbbr^{n-1}\to\bbbr$.
If the convex functions $g,h$ are of class $C^{2}$, we say that $G$ is a {\em $C^{2}-(c-c)$-graph}.

\begin{theorem}
\label{T1}
For any closed set $E\subset\bbbr^n$ we have
\begin{enumerate}
\item[(a)]
The set $\bbbr^n\setminus\unp(E)$ can be covered by countably many $(c-c)$-graphs.
\item[(b)]
There are countably many $C^{2}-(c-c)$-graphs $\{ G_j\}_{j=1}^\infty$ such that
$$
\cH^{n-1}\Big((\bbbr^n\setminus\unp(E))\setminus\bigcup_{j=1}^\infty G_j\Big)=0,
$$
where $\cH^{n-1}$ stands for the Hausdorff measure.
\end{enumerate}
\end{theorem}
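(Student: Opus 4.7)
The plan is to realise $\bbbr^n\setminus\unp(E)$ as the non-differentiability set of a single convex function on $\bbbr^n$ and then to appeal to the classical covering theorems for singular sets of convex functions. Set $d_E(x):=\dist(x,E)$ and define
\[
\psi(x)=|x|^2-d_E(x)^2.
\]
Expanding $|x-y|^2=|x|^2-2\langle x,y\rangle+|y|^2$ and taking the infimum over $y\in E$ yields
\[
\psi(x)=\sup_{y\in E}\bigl(2\langle x,y\rangle-|y|^2\bigr),
\]
so $\psi$ is the supremum of a family of affine functions, and hence convex on all of $\bbbr^n$.

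The next step is to identify the ambiguous locus with the singular set of $\psi$. Because $y\mapsto 2\langle x,y\rangle-|y|^2$ attains its maximum over $E$ precisely at the nearest points of $E$ to $x$, the subdifferential of $\psi$ is
\[
\partial\psi(x)=\overline{\mathrm{conv}}\bigl\{2y:y\in E,\ |x-y|=d_E(x)\bigr\}.
\]
Hence $\psi$ is differentiable at $x$ if and only if the nearest point of $E$ to $x$ is unique, i.e.\ iff $x\in\unp(E)$; at points $x\in E$ the condition holds trivially, consistent with $E\subset\unp(E)$. Thus $\bbbr^n\setminus\unp(E)$ equals the set of non-differentiability points of the convex function $\psi$.

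With this reduction, parts (a) and (b) become assertions about the singular set of an arbitrary convex function on $\bbbr^n$: (a) such a set is covered by countably many $(c-c)$-graphs, a classical theorem of Zaj\'\i\v cek; (b) outside an $\cH^{n-1}$-null exceptional set it is covered by countably many $C^{2}$-$(c-c)$-graphs, which is the $C^{2}$ refinement due to Alberti. It is very plausible that one of these statements is precisely the content of the self-contained Theorem~\ref{T10} advertised in the introduction. I would then apply these covering results directly to $\psi$ and conclude.

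The main obstacle is the covering theorems themselves, especially the $C^{2}$ refinement required for~(b). The standard route stratifies the singular set of $\psi$ according to the diameter of $\partial\psi$ and the rational direction $v$ of the corresponding jump: on a stratum where some one-sided directional derivative of $\psi$ jumps by at least $1/k$ in direction $v$, monotonicity of the one-sided directional derivative along every line parallel to $v$ exhibits the stratum as the graph over $v^\perp$ of a function that is the difference of two convex functions, which already yields~(a). Upgrading each such $(c-c)$-graph to a $C^{2}$-$(c-c)$-graph up to $\cH^{n-1}$-null error is the genuinely delicate step; the cleanest known approach proceeds through Alberti's rank-one theorem applied to the matrix-valued measure $D^{2}\psi$, combined with a Whitney/Lusin-type $C^{2}$ approximation of $(c-c)$-functions on $\bbbr^{n-1}$.
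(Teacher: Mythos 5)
Your reduction is essentially the paper's: the Asplund function $\psi(x)=|x|^2-d_E(x)^2$ is convex as a supremum of affine functions (Lemma~\ref{T3}), the ambiguous locus is contained in its non-differentiability set, and part (a) then follows from Zaj\'{\i}\v{c}ek's theorem, which the paper reproves as Theorem~\ref{T10} (via the strongly convex modification $f+|x|^2$ and the partial infima $g_\alpha,g_\beta$; your one-line sketch of the stratification is a gesture at exactly this argument). One small variation: you identify the singular set through the formula $\partial\psi(x)=\mathrm{conv}\{2y:\ |x-y|=d_E(x)\}$, whose nontrivial inclusion you do not prove; fortunately only the trivial inclusion is needed, since two distinct nearest points $y_1\neq y_2$ give two distinct supporting affine functions $z\mapsto 2\langle z,y_i\rangle-|y_i|^2$ at $x$, hence $\psi$ is not differentiable there. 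The paper instead passes through differentiability of the distance function itself (Lemma~\ref{T2}); both routes are fine, and your claimed ``if and only if'' is stronger than what the theorem requires.

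For (b) your plan is in the right spirit but has two inaccuracies. First, ``Alberti's rank-one theorem applied to $D^2\psi$'' is not the relevant tool: what is needed is either Alberti's $C^2$-rectifiability theorem for singular sets of convex functions, or, as the paper does, the Aleksandrov--Whitney Lusin-type statement that a convex function on $\bbbr^{n-1}$ coincides with a $C^2$ function off a set of measure less than $\eps$; one applies this to each convex component of the $(c-c)$-functions produced in (a), and uses that $(c-c)$-functions are locally Lipschitz to convert the exceptional null set in the domain into an $\cH^{n-1}$-null set on the graph (a step you leave implicit). Second, the statement asks for $C^{2}-(c-c)$-graphs, i.e.\ graphs of differences of \emph{convex} $C^2$ functions, not merely $C^2$ graphs; you still need the easy but necessary observation that any $f\in C^2$ coincides on $\{|x|\leq R\}$ with $(\varphi f+C_R|x|^2)-C_R|x|^2$, a difference of convex $C^2$ functions, where $\varphi$ is a cutoff equal to $1$ on that ball and $C_R$ is large. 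With these two repairs your argument coincides with the paper's proof.
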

\begin{remark}
Since convex functions are locally Lipschitz continuous \cite[Theorem~41D]{RV}, 
compact subsets of $(c-c)$-graphs have finite $(n-1)$-dimensional Hausdorff measure and hence the
$(n-1)$-dimensional Hausdorff measure of $\bbbr^n\setminus\unp(E)$ is $\sigma$-finite. 
Therefore, part (a) implies Erd\"os' result \cite{erdos} mentioned earlier.
\end{remark}
\begin{remark}
It follows from (b) that $\bbbr^n\setminus \unp(E)$ is $(\cH^{n-1},n)$-rectifiable of class $C^2$ in the sense of \cite{anzellottiS}.
\end{remark}

The paper is organized as follows. In Section~\ref{convex} we collect basic facts about convex functions. In Section~\ref{ludek} we prove a special case of a result of Zaj\'{\i}\v{c}ek \cite{zajicek}, and in Section~\ref{trzy} we prove Theorem~\ref{T1}.

\section{Convex functions}
\label{convex}
Let $f:\bbbr^n\to\bbbr$ be a continuous function. The {\em subdifferential of $f$ at $x$}, denoted by $\partial f(x)$,  is the set of all $v\in\bbbr^n$ such that
\begin{equation}
\label{eq4}
f(x+h)\geq f(x)+\langle v,h\rangle
\quad
\text{for all $h\in\bbbr^n$.}
\end{equation}
The geometric interpretation is that each $v\in\partial f(x)$ defines
a hyperplane passing through $(x,f(x))$ such that the graph of $f$ is above that hyperplane. Such hyperplanes are called {\em supporting hyperplanes} of the graph of $f$ at $(x,f(x))$.
If $\partial f(x)$ contains more than one vector, it means that there is more than one supporting hyperplane at $(x,f(x))$.

\begin{lemma}
\label{T6}
If $f:\bbbr^n\to\bbbr$ is convex, then for every $x\in\bbbr^n$, $\partial f(x)\neq\varnothing$. That is at every point, there is at least one supporting hyperplane. If in addition $f$ is differentiable at $x$, then $\partial f(x)=\{\nabla f(x)\}$ so in that case the tangent hyperplane  is a unique supporting hyperplane.
\end{lemma}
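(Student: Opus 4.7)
My plan is to prove the two assertions separately: first the existence of a subgradient using a Hahn--Banach/supporting hyperplane argument, and then the identification $\partial f(x)=\{\nabla f(x)\}$ under differentiability by a direct computation from the defining inequality.

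For existence, I would work with the epigraph $\operatorname{epi}(f)=\{(y,t)\in\bbbr^n\times\bbbr:\, t\geq f(y)\}$, which is a closed convex set with non-empty interior, since $f$ is convex and continuous (convex functions $\bbbr^n\to\bbbr$ are automatically continuous). The point $(x,f(x))$ lies on the boundary of $\operatorname{epi}(f)$, so the supporting hyperplane theorem for convex bodies provides a non-zero $(v,c)\in\bbbr^n\times\bbbr$ with
\begin{equation*}
\langle v,y-x\rangle + c(t-f(x))\leq 0\quad\text{for all }(y,t)\in\operatorname{epi}(f).
\end{equation*}
Since $t$ can be made arbitrarily large, one must have $c\leq 0$. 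The key point to rule out is $c=0$: if $c=0$, the inequality $\langle v,y-x\rangle\leq 0$ for all $y\in\bbbr^n$ forces $v=0$, contradicting $(v,c)\neq 0$. Hence $c<0$, and after dividing by $-c>0$ and taking $t=f(y)$ I obtain $f(y)\geq f(x)+\langle v/(-c),y-x\rangle$, i.e.\ $v/(-c)\in\partial f(x)$. (An equivalent route uses the one-sided directional derivative $f'(x;h)=\lim_{t\to 0^+}(f(x+th)-f(x))/t$, which exists and is finite by monotonicity of the difference quotients for convex $f$; sublinearity of $h\mapsto f'(x;h)$ plus Hahn--Banach yields the same conclusion.)

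For the differentiability statement, I would first verify that $\nabla f(x)\in\partial f(x)$. For any $h\in\bbbr^n$ and $t\in(0,1)$, convexity gives
\begin{equation*}
f(x+th)=f((1-t)x+t(x+h))\leq (1-t)f(x)+tf(x+h),
\end{equation*}
so $(f(x+th)-f(x))/t\leq f(x+h)-f(x)$. Letting $t\to 0^+$ and using differentiability, $\langle\nabla f(x),h\rangle\leq f(x+h)-f(x)$, which is exactly \eqref{eq4} with $v=\nabla f(x)$. Conversely, let $v\in\partial f(x)$ be arbitrary. Applying \eqref{eq4} with $h$ replaced by $\pm th$, $t>0$, yields $\pm t\langle v,h\rangle\leq f(x\pm th)-f(x)$. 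Dividing by $t$ and sending $t\to 0^+$ gives $\langle v,h\rangle\leq \langle\nabla f(x),h\rangle$ and $-\langle v,h\rangle\leq -\langle\nabla f(x),h\rangle$, so $\langle v,h\rangle=\langle\nabla f(x),h\rangle$ for every $h$, forcing $v=\nabla f(x)$.

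There is no serious obstacle here; the only subtlety is ruling out a vertical supporting hyperplane in the existence part, which as noted above is immediate from the fact that $f$ is everywhere finite on $\bbbr^n$. Both arguments are elementary and amount to unwinding the definitions, so the proof can be written in a few lines.
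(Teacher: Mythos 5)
Your proof is correct, but the existence part follows a genuinely different route from the paper. You apply the supporting hyperplane (finite-dimensional Hahn--Banach separation) theorem to the epigraph at the boundary point $(x,f(x))$ and then rule out a vertical hyperplane by noting that $c=0$ would force $\langle v,y-x\rangle\leq 0$ for all $y\in\bbbr^n$ and hence $v=0$; this is the standard convex-analysis argument, short and clean, but it leans on the separation theorem as an external ingredient. The paper instead builds the supporting hyperplane by hand: it takes the nearest point $p_k$ on the graph to $q_k=(x,f(x)-k^{-1})$, observes (using convexity) that the graph lies above the perpendicular bisector hyperplane $H_k$ of $[p_k,q_k]$, and then extracts a convergent subsequence of unit normals $\nu_{k_i}\to\nu$ to obtain a supporting hyperplane through $(x,f(x))$. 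That construction is self-contained, in keeping with the paper's stated aim, and it is essentially a concrete metric-projection proof of the very separation principle you invoke; your version trades that compactness argument for a citation but must handle the verticality issue explicitly, which you do correctly since $f$ is finite on all of $\bbbr^n$. For the second assertion, the paper only remarks that it ``easily follows from the definition of the derivative and convexity''; your two-sided computation, showing $\nabla f(x)\in\partial f(x)$ via monotone difference quotients and forcing $\langle v,h\rangle=\langle\nabla f(x),h\rangle$ for every $h$ by testing \eqref{eq4} with $\pm th$, is exactly the intended argument, spelled out. No gaps.
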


\begin{proof}
It easily follows from the definition of the derivative and convexity, that if $f$ is differentiable at $x$, then there is a unique supporting hyperplane defined by $\nabla f(x)$ i.e., $\partial f(x)=\{\nabla f(x)\}$. Thus it remains to prove existence in the general case.

For $k=1,2,\ldots$, let $p_k$ be the point on the graph of $f$ nearest to $q_k=(x,f(x)-k^{-1})$. Let $H_k$ be the hyperplane orthogonal to the segment $[p_k,q_k]$ and passing through its midpoint. It follows from the convexity of $f$ that the graph of $f$ lies above the hyperplane $H_k$. Since the unit normal vectors to $H_k$ belong to the compact unit sphere $\nu_k\in\Sph^{n-1}$, we can select a convergent sequence $\nu_{k_i}\to\nu\in\Sph^{n-1}$ and it is easy to see that the hyperplane normal to $\nu$ and passing through $(x,f(x))$ is a supporting one. 
\end{proof}

\begin{lemma}
\label{T4}
If  $f:\bbbr^n\to\bbbr$ is convex and partial derivatives $\partial f/\partial x_i$, $i=1,2\ldots,n$ exist at a point $x\in\bbbr^n$, then $f$ is (Fr\'echet) differentiable at $x$.
\end{lemma}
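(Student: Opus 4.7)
The plan is to show that the subdifferential $\partial f(x)$, which is nonempty by Lemma~\ref{T6}, reduces to the singleton $\{\nabla f(x)\}$ of partial derivatives; then, after subtracting an affine function to assume $f(x)=0$, $\nabla f(x)=0$, and $f\geq 0$, to prove that $f(x+h)/|h|\to 0$ uniformly in the direction of $h$ via a convex-combination identity expressing $x+h$ as an average of axial perturbations.

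For the first step I would pick any $v\in\partial f(x)$ and feed $h=\pm te_i$, $t>0$, into the subgradient inequality \eqref{eq4}. Dividing by $\pm t$ and letting $t\to 0^+$ traps each coordinate $v_i$ between the left and right one-sided partial derivatives in direction $e_i$, which coincide and equal $\partial f/\partial x_i(x)$ by hypothesis; hence $v=\nabla f(x)$ and $\partial f(x)=\{\nabla f(x)\}$. Replacing $f$ by $\tilde f(y)=f(y)-f(x)-\langle\nabla f(x),y-x\rangle$, which is again convex with $\tilde f(x)=0$, with all partial derivatives of $\tilde f$ at $x$ equal to $0$, and with $\tilde f\geq 0$ globally (by the subgradient inequality applied at $x$), I reduce to the normalized case.

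The heart of the argument is the identity
\begin{equation*}
x+h=\frac{1}{n}\sum_{i=1}^n(x+nh_ie_i),
\end{equation*}
which exhibits $x+h$ as a convex combination of $n$ axial perturbations of $x$. Convexity together with $f(x)=0$ gives
\begin{equation*}
0\leq f(x+h)\leq\frac{1}{n}\sum_{i=1}^n f(x+nh_ie_i).
\end{equation*}
For each $i$, convexity of $\tau\mapsto f(x+\tau e_i)$ (vanishing at $\tau=0$ and nonnegative) makes the slope $f(x+\tau e_i)/\tau$ non-decreasing on $\tau\neq 0$, so $|f(x+\tau e_i)/\tau|\leq\omega_i(\delta):=\max\{f(x+\delta e_i),f(x-\delta e_i)\}/\delta$ whenever $0<|\tau|\leq\delta$; and $\omega_i(\delta)\to 0$ as $\delta\to 0^+$ because $\partial f/\partial x_i(x)=0$. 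Setting $\omega(\delta)=\max_i\omega_i(\delta)$, applying this bound with $\tau=nh_i$ (so $|\tau|\leq n|h|$), and using $\sum_i|h_i|\leq\sqrt{n}\,|h|$, I reach
\begin{equation*}
0\leq\frac{f(x+h)}{|h|}\leq\sqrt{n}\,\omega(n|h|)\to 0\quad\text{as }|h|\to 0,
\end{equation*}
uniformly in the direction of $h$, which is the desired Fr\'echet differentiability. The main obstacle is precisely this uniformity: the subgradient identity by itself only yields Gateaux differentiability, and the convex-combination identity is the device that promotes the one-dimensional (axial) decay to a direction-uniform one.
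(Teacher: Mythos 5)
Your proof is correct and follows essentially the same route the paper indicates (it defers details to Roberts--Varberg but sketches exactly this argument): write $h$ as a convex combination of axial vectors and apply Jensen/convexity to $\varphi(h)=f(x+h)-f(x)-\langle\nabla f(x),h\rangle$ to get $\varphi(h)=o(|h|)$. Your identity $x+h=\frac1n\sum_i(x+nh_ie_i)$ and the use of Lemma~\ref{T6} to get $\varphi\geq 0$ are precisely the ``missing details'' the paper leaves as an exercise.
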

For a proof see \cite[Theorem~42D]{RV}.
This lemma follows from Jensen's inequality: any vector $h\in\bbbr^n$ can be expressed as a convex combination of vectors parallel to coordinate axes and this along with the Jensen inequality applied to the convex function $\varphi(h):=f(x+h)-f(x)-\langle \nabla f(x),h\rangle$ allows us to show that $\varphi(h)$ converges to $0$ as $o(|h|)$. It might be more rewarding to fill missing details as an exercise rather than to read the proof from~\cite{RV}.

One sided partial derivatives will be denoted by 
\begin{equation}
\label{eq2}
\frac{\partial^\pm f}{\partial x_i}(x)=\lim_{t\to 0^\pm}\frac{f(x+te_i)-f(x)}{t}\, .
\end{equation}
The next lemma easily follows from the monotonicity of secants of a convex function in one variable.
\begin{lemma}
\label{T8}
If $f:\bbbr^n\to\bbbr$ is convex, then one-sided partial derivatives \eqref{eq2} exist at every point $x\in\bbbr^n$ and $\partial^-f(x)/\partial x_i\leq \partial^+f(x)/\partial x_i$.
Moreover, for any $x\in\bbbr^n$, and $1\leq i\leq n$ 
\begin{equation}
\label{eq13}
f(x+te_i)\geq f(x)+s t\ 
\text{for all $t\in\bbbr$ and all $s$ satisfying}
\
\frac{\partial^-f}{\partial x_i}(x)\leq s\leq 
\frac{\partial^+f}{\partial x_i}(x).
\end{equation}
\end{lemma}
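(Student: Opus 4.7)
The plan is to reduce everything to the one-variable case: fix $x \in \bbbr^n$ and $1\leq i\leq n$, and set $g(t):=f(x+te_i)$. Since $f$ is convex, so is $g:\bbbr\to\bbbr$ (the restriction of a convex function to a line is convex). Under this reduction, the one-sided partial derivatives in \eqref{eq2} become the one-sided derivatives $g'_\pm(0)$, and the desired inequality \eqref{eq13} becomes $g(t)\geq g(0)+st$ for every $t\in\bbbr$ and every $s\in[g'_-(0), g'_+(0)]$.

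The key tool is the three-slope inequality for a convex $g:\bbbr\to\bbbr$: whenever $a<b<c$,
$$
\frac{g(b)-g(a)}{b-a}\ \leq\ \frac{g(c)-g(a)}{c-a}\ \leq\ \frac{g(c)-g(b)}{c-b}.
$$
This is an immediate consequence of writing $b=\lambda a+(1-\lambda)c$ with $\lambda\in(0,1)$ and applying $g(b)\leq \lambda g(a)+(1-\lambda)g(c)$, rearranged in two ways. From this one deduces that $t\mapsto (g(t)-g(0))/t$ is non-decreasing on each of the intervals $(-\infty,0)$ and $(0,\infty)$, and that for any $t_-<0<t_+$ we have $(g(t_-)-g(0))/t_-\leq (g(t_+)-g(0))/t_+$ (apply the inequality with $a=t_-$, $b=0$, $c=t_+$).

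Consequently, the secant slopes decrease monotonically as $t\to 0^+$ and increase monotonically as $t\to 0^-$, so the one-sided limits
$$
g'_+(0)=\inf_{t>0}\frac{g(t)-g(0)}{t}, \qquad g'_-(0)=\sup_{t<0}\frac{g(t)-g(0)}{t}
$$
exist in $\bbbr$, and the cross-comparison above yields $g'_-(0)\leq g'_+(0)$. This proves the first assertion of the lemma.

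For the inequality \eqref{eq13}, let $s\in[g'_-(0),g'_+(0)]$. If $t>0$ then $(g(t)-g(0))/t\geq g'_+(0)\geq s$, which gives $g(t)\geq g(0)+st$. If $t<0$ then $(g(t)-g(0))/t\leq g'_-(0)\leq s$, and multiplying by the negative number $t$ reverses the inequality to $g(t)-g(0)\geq st$. The case $t=0$ is trivial. There is no real obstacle in this argument: the only point that requires care is tracking the sign reversal when multiplying by $t<0$, which is exactly what forces the left-derivative bound to play the role of the upper bound for $s$ in that regime.
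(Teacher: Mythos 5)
Your proof is correct and is exactly the argument the paper has in mind: the paper gives no detailed proof, noting only that the lemma ``easily follows from the monotonicity of secants of a convex function in one variable,'' and your reduction to $g(t)=f(x+te_i)$ together with the three-slope inequality is precisely that monotonicity argument, carried out carefully (including the sign reversal for $t<0$).
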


Thus a convex function $f$ is {\em not} differentiable at $x$ if and only if there is $i\in\{1,2,\ldots,n\}$ such that
\begin{equation}
\label{eq6}
\frac{\partial^-f}{\partial x_i}(x)
<\frac{\partial^+f}{\partial x_i}(x).
\end{equation}
Indeed, according to Lemma~\ref{T4}, $f$ is differentiable at $x$ if and only if partial derivatives exist and that is equivalent to equality of all one-sided partial derivatives.

Therefore, if $A\subset\bbbr^n$ is the set of points where a convex function $f$ is not differentiable, then
\begin{equation}
\label{eq7}
A=\bigcup_{i=1}^n\bigcup_{\alpha<\beta\atop \alpha,\beta\in\bbbq}
\underbrace{\Big\{x:\, \frac{\partial^-f}{\partial x_i}(x)\leq \alpha<\beta\leq\frac{\partial^+f}{\partial x_i}(x)\Big\}}_{A_{\alpha,\beta}^i}.
\end{equation}

We say that a convex function $f:\bbbr^n\to\bbbr$ {\em is coercive} if $f(x)\to\infty$ as $|x|\to\infty$, and $f$ is {\em strongly convex} if $f(x)-\mu|x|^2$ is convex for some $\mu>0$. 
Clearly, if $f$ is convex, then $f(x)+|x|^2$ is strongly convex.

If $f$ is strongly convex, then it is coercive in the following stronger sense:
\begin{equation}
\label{eq3}
\lim_{|x|\to\infty} (f(x)-\ell(x))=\infty
\quad
\text{for any linear function $\ell:\bbbr^n\to\bbbr$.}
\end{equation}
Indeed, $f(x)-\mu|x|^2$ is convex and hence an affine function (supporting hyperplane) bounds it from below, $f(x)-\mu|x|^2\geq A(x)$  so $f(x)-\ell(x)\geq A(x)+\mu|x|^2-\ell(x)\to\infty$ as $|x|\to\infty$.

\begin{lemma}
\label{T5}
If $f:\bbbr^k\times\bbbr^\ell\to\bbbr$ is convex and coercive, then
$F(x):=\inf_{y\in\bbbr^\ell} f(x,y)$ defines a convex function $F:\bbbr^k\to\bbbr$.
\end{lemma}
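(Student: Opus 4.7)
The plan is to verify two things: that $F(x)$ is a finite real number for every $x\in\bbbr^k$, and that the resulting function satisfies the convexity inequality. Both are short consequences of the assumed coercivity and convexity of $f$.

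For well-definedness, the upper bound $F(x)\le f(x,0)<\infty$ is immediate, since any convex function on $\bbbr^n$ is automatically continuous, hence finite at every point. For the lower bound I would invoke coercivity. Fix $x\in\bbbr^k$; as $|y|\to\infty$ we have $|(x,y)|\to\infty$, so $f(x,y)\to\infty$ by the coercivity assumption on the joint function $f$. Combined with the continuity of $y\mapsto f(x,y)$, this forces the infimum over $y\in\bbbr^\ell$ to be attained at some $y^*=y^*(x)\in\bbbr^\ell$, and hence $F(x)=f(x,y^*)\in\bbbr$.

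For convexity, let $x_1,x_2\in\bbbr^k$ and $\lambda\in[0,1]$, and pick minimizers $y_1^*,y_2^*$ as above. Testing the definition of $F$ at the point $\lambda x_1+(1-\lambda)x_2$ with the candidate $\lambda y_1^*+(1-\lambda)y_2^*\in\bbbr^\ell$, and then applying the convexity of $f$ on the product space, gives
$$
F\bigl(\lambda x_1+(1-\lambda)x_2\bigr)\le f\bigl(\lambda(x_1,y_1^*)+(1-\lambda)(x_2,y_2^*)\bigr)\le \lambda F(x_1)+(1-\lambda)F(x_2),
$$
which is exactly the desired inequality.

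I do not foresee any serious obstacle. The only slightly delicate point is ruling out $F(x)=-\infty$, and this is precisely what coercivity buys us. If one wanted to avoid invoking the existence of a minimizer, one could work instead with $\varepsilon$-almost minimizers $y_i$ satisfying $f(x_i,y_i)<F(x_i)+\varepsilon$ and let $\varepsilon\to 0^+$ in the same chain of inequalities; the conclusion is identical.
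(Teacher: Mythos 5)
Your proof is correct and follows essentially the same route as the paper: the heart of both arguments is the identical chain of inequalities using joint convexity of $f$ at $\bigl(\lambda x_1+(1-\lambda)x_2,\lambda y_1+(1-\lambda)y_2\bigr)$, and the paper simply takes arbitrary $y_1,y_2$ and passes to infima --- precisely the $\varepsilon$-almost-minimizer variant you mention --- so no minimizers are needed for that step. Your explicit check that $F(x)>-\infty$ (attainment of the infimum via coercivity and continuity of convex functions) addresses a point the paper leaves implicit, and it is exactly where the coercivity hypothesis is actually used.
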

\begin{proof}
Let $x_1,x_2\in\bbbr^k$ and $\lambda\in [0,1]$. Then for any $y_1,y_2\in\bbbr^\ell$ we have
\begin{equation*}
\begin{split}
&F(\lambda x_1+(1-\lambda)x_2)\leq f(\lambda x_1+(1-\lambda)x_2,\lambda y_1+(1-\lambda)y_2)\\
&=
f(\lambda(x_1,y_1)+(1-\lambda)(x_2,y_2))
\leq
\lambda f(x_1,y_1)+(1-\lambda)f(x_2,y_2)
\end{split}
\end{equation*}
and the result follows upon taking the infima over $y_1\in\bbbr^\ell$ and $y_2\in\bbbr^\ell$.
\end{proof}

\section{A theorem of Lud\v{e}k Zaj\'{\i}\v{c}ek} 
\label{ludek}
The next result is a special case of a theorem of Zaj\'{\i}\v{c}ek \cite[Theorem~1]{zajicek}.
\begin{theorem}
\label{T10}
If $f:\bbbr^n\to\bbbr$
is convex, then the set of points where $f$ is not differentiable
is contained in a countable union of $(c-c)$-graphs.
\end{theorem}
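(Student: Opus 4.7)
The approach I would take combines the decomposition \eqref{eq7} with a strongly-convex-regularization trick. By \eqref{eq7} it suffices to show that for each $1\leq i\leq n$ and each pair $\alpha<\beta\in\bbbq$, the set $A^i_{\alpha,\beta}$ is contained in a countable union of $(c-c)$-graphs, so I will fix $i = n$ without loss of generality and work with the last coordinate $y := x_n$.

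For each $\gamma\in\bbbq$ I introduce the auxiliary function
\[ \phi_\gamma(x', y) := f(x', y) - \beta y + \tfrac{1}{2}(y - \gamma)^2, \qquad (x', y)\in\bbbr^{n-1}\times\bbbr, \]
which is jointly convex in $(x', y)$ (sum of convex functions) and $1$-strongly convex in $y$, so for every $x'\in\bbbr^{n-1}$ it has a unique minimizer $m_\gamma(x')\in\bbbr$. The optimality condition $0\in\partial_y\phi_\gamma(x', m_\gamma(x'))$ reads $\beta+\gamma-m_\gamma(x')\in\partial_y f(x', m_\gamma(x'))$. Combined with \eqref{eq13}, which gives $[\alpha,\beta]\subset\partial_y f(x', x_n)$ whenever $(x', x_n)\in A^n_{\alpha,\beta}$, an easy subdifferential check forces $m_\gamma(x') = x_n$ precisely when $\gamma \leq x_n \leq \gamma + (\beta-\alpha)$. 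As $\gamma$ ranges over $\bbbq$ these intervals of length $\beta-\alpha>0$ cover $\bbbr$, so the countable family of graphs $G_\gamma := \{(x', m_\gamma(x')) : x'\in\bbbr^{n-1}\}$ covers $A^n_{\alpha,\beta}$.

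The main obstacle, and the technical heart of the argument, is to show that each $m_\gamma$ is globally a $(c-c)$-function on $\bbbr^{n-1}$. Completing the square rewrites $m_\gamma$ as a proximal map:
\[ m_\gamma(x') = (\gamma+\beta) - \partial_s M(x', \gamma+\beta), \qquad M(x', s) := \min_{y\in\bbbr}\bigl[f(x', y) + \tfrac{1}{2}(y - s)^2\bigr]. \]
The Moreau envelope $M$ is jointly convex on $\bbbr^n$ by (a direct extension of) Lemma~\ref{T5}, since the strongly convex quadratic guarantees attainment and finiteness of the infimum in $y$, and $M$ is $C^{1,1}$ in $s$ with Lipschitz constant $1$ by standard Moreau-envelope theory. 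The claim therefore reduces to: \emph{the partial derivative $\partial_s M(\cdot, s_0)$ of a jointly convex function on $\bbbr^{n-1}\times\bbbr$ that is $C^{1,1}$ in its last variable is a $(c-c)$-function of $x'$}. I expect to prove this by exploiting the interplay of joint convexity with semiconcavity in $s$ (the function $M(x', s) - s^2/2$ is concave in $s$): both $M(\cdot, s_0\pm h)$ are convex in $x'$ for every $h>0$, and a suitable combination corrected by the $C^{1,1}$-bounded quadratic remainder of Taylor expansion in $s$ should yield an explicit decomposition $\partial_s M(\cdot, s_0) = g_1 - g_2$ with convex $g_1, g_2$. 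Carrying this through cleanly and globally (not just locally) is the delicate part. Once it is in hand, taking the countable union of the graphs $G_\gamma$ over all admissible tuples $(i,\alpha,\beta,\gamma)$ with $1\leq i\leq n$ and $\alpha<\beta,\gamma\in\bbbq$ completes the proof.
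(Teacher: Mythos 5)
Your reduction is sound: the decomposition \eqref{eq7}, the tilt by $\beta$ combined with the quadratic penalty $\tfrac12(y-\gamma)^2$, the optimality/subdifferential check giving $m_\gamma(x')=x_n$ whenever $\gamma\le x_n\le\gamma+(\beta-\alpha)$ (you only need ``whenever'', not ``precisely''), and the covering of $\bbbr$ by the rational shifts $\gamma$ are all correct. The gap is exactly at what you yourself call the technical heart: you never prove that $m_\gamma$, equivalently $\partial_s M(\cdot,s_0)$, is a $(c-c)$-function of $x'$, and the route you sketch does not obviously deliver it. Joint convexity plus the $C^{1,1}$-bound in $s$ gives, for each $h>0$, only $0\le \frac{M(x',s_0+h)-M(x',s_0)}{h}-\partial_s M(x',s_0)\le \tfrac{h}{2}$, i.e.\ it exhibits $\partial_s M(\cdot,s_0)$ as a \emph{uniform limit} of differences of convex functions; but the class of $(c-c)$-functions is not closed under uniform limits, so no Taylor-remainder correction is automatic. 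A partial derivative of a convex function restricted to a slice $s=s_0$ is not in general $(c-c)$ in the remaining variables, so any proof would have to exploit the Moreau-envelope structure in an essential way, and no such argument is given. As written the proof is therefore incomplete at its central step.

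The repair is already contained in your setup, and it is precisely the device of the paper: do not try to show that the argmin map is $(c-c)$; instead evaluate the envelope at \emph{two} admissible shifts and subtract, which eliminates the unknown value of $f$. Let $\Phi_\gamma(x'):=\min_{y\in\bbbr}\phi_\gamma(x',y)$, which is convex in $x'$ (partial minimization of a jointly convex function that is bounded below in $y$, as in Lemma~\ref{T5}). Fix a rational $0<q<\beta-\alpha$. If $x_n\in[\gamma+q,\gamma+(\beta-\alpha)]$, then both $\gamma$ and $\gamma+q$ are admissible, so $\Phi_\gamma(x')=f(x',x_n)-\beta x_n+\tfrac12(x_n-\gamma)^2$ and $\Phi_{\gamma+q}(x')=f(x',x_n)-\beta x_n+\tfrac12(x_n-\gamma-q)^2$, and subtracting gives $x_n=\tfrac1q\bigl(\Phi_\gamma(x')-\Phi_{\gamma+q}(x')\bigr)+\gamma+\tfrac q2$, manifestly a $(c-c)$-function of $x'$; the corresponding graphs over rational $\gamma$ still cover $A^n_{\alpha,\beta}$ since the intervals $[\gamma+q,\gamma+(\beta-\alpha)]$ cover $\bbbr$. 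This is the same ``two tilts and subtract'' trick as in Proposition~\ref{T11}, where the linear tilts $\alpha x_1$ and $\beta x_1$ applied to the strongly convex function $f(x)+|x|^2$ play the role of your quadratic penalties: the coordinate of the bad point is recovered as a difference of two convex envelope functions, and no regularity of the minimizer map is ever needed.
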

\begin{remark}
\label{rem}
While our proof is almost the same as the original one, the result is a slight improvement of that of Zaj\'{\i}\v{c}ek, as Proposition~\ref{T11} and its proof provide a more detailed description of surfaces covering the non-differentiability sets $A_{\alpha,\beta}^i$ in the case of strongly convex functions.
Other reasons to include a proof are: (1) to make the paper self-contained; (2) the result \cite[Theorem~1]{zajicek} whose special case we prove here is more difficult to read due to its generality; (3) while the theorem of Zaj\'{\i}\v{c}ek has been cited many times, it is a good idea to provide an easy to read proof as it might contribute to popularization of this beautiful result.
\end{remark}

Since $f$ is non-differentiable at $x$ if and only if the strongly convex function $f(x)+|x|^2$ is non-differentiable at $x$, Theorem~\ref{T10} follows from the next result whose proof provides a more detailed description of the structure of the discontinuity sets $A_{\alpha,\beta}^i$.

We say that graphs of the form \eqref{eq12} are  graphs {\em in the direction of $x_i$.}

\begin{proposition}
\label{T11}
If $f:\bbbr^n\to\bbbr$ 
is strongly convex, then each of the sets $A_{\alpha,\beta}^i$ is contained in a $(c-c)$-graph in the direction of $x_i$. Therefore, the set $A$ is contained in a countable union of $(c-c)$-graphs.
\end{proposition}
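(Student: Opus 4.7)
My plan is to show that, for each triple $(\alpha,\beta,i)$ with rationals $\alpha<\beta$ and $1\le i\le n$, the set $A_{\alpha,\beta}^i$ is contained in the graph, in the direction of $x_i$, of a single explicit $(c-c)$-function built from two partial infima of $f$. Without loss of generality I take $i=1$ and write $x=(x_1,x')$ with $x'\in\bbbr^{n-1}$. The two key objects are
$$g_\alpha(x'):=\inf_{t\in\bbbr}\bigl[f(t,x')-\alpha t\bigr], \qquad g_\beta(x'):=\inf_{t\in\bbbr}\bigl[f(t,x')-\beta t\bigr].$$

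The first step is to verify that $g_\alpha,g_\beta:\bbbr^{n-1}\to\bbbr$ are well-defined, finite, and convex. The function $F_\alpha(t,x'):=f(t,x')-\alpha t$ is strongly convex with the same constant $\mu>0$ as $f$, since adding the linear term $-\alpha t$ does not affect the fact that $F_\alpha(t,x')-\mu(t^2+|x'|^2)$ is convex. The strong coerciveness \eqref{eq3} then gives $F_\alpha(t,x')\to\infty$ as $|(t,x')|\to\infty$, and Lemma~\ref{T5} yields that $g_\alpha$ is a real-valued convex function on $\bbbr^{n-1}$. The same applies to $g_\beta$.

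The heart of the argument is the identification of $g_\alpha$ and $g_\beta$ on $A_{\alpha,\beta}^1$. Fix $(x_1,x')\in A_{\alpha,\beta}^1$, so that $\partial^-f/\partial x_1(x_1,x')\le\alpha<\beta\le\partial^+f/\partial x_1(x_1,x')$. Applying Lemma~\ref{T8} at this point with $s=\alpha$ and then with $s=\beta$ gives, for every $t\in\bbbr$,
$$f(x_1+t,x')-\alpha(x_1+t)\ge f(x_1,x')-\alpha x_1, \qquad f(x_1+t,x')-\beta(x_1+t)\ge f(x_1,x')-\beta x_1,$$
so $x_1$ attains both infima, i.e.\ $g_\alpha(x')=f(x_1,x')-\alpha x_1$ and $g_\beta(x')=f(x_1,x')-\beta x_1$. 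Subtracting the two identities cancels the nonlinear term $f(x_1,x')$ and leaves
$$x_1=\frac{g_\alpha(x')-g_\beta(x')}{\beta-\alpha}=:\phi(x').$$
Since $\phi$ is a difference of convex functions on $\bbbr^{n-1}$, and convex functions are continuous, $\phi$ is a continuous $(c-c)$-function; the graph $\{x:x_1=\phi(x_2,\ldots,x_n)\}$ is then a $(c-c)$-graph in the direction of $x_1$ containing $A_{\alpha,\beta}^1$.

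The second assertion of the proposition is then immediate from the decomposition \eqref{eq7}: the set $A$ is a countable union of sets of the form $A_{\alpha,\beta}^i$, each covered by a single $(c-c)$-graph by what we just proved. I expect the only real obstacle to be spotting that it is the \emph{difference} $g_\alpha-g_\beta$ that trivializes, cleanly extracting $x_1$ as a $(c-c)$-function of the remaining variables; everything else reduces to a direct application of Lemmas~\ref{T5} and~\ref{T8}, and the role of strong convexity is confined to ensuring that the infima defining $g_\alpha,g_\beta$ are finite and that Lemma~\ref{T5} is applicable.
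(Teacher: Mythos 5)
Your proposal is correct and follows essentially the same route as the paper's proof: apply Lemma~\ref{T5} to the coercive convex functions $f(x)-\alpha x_1$ and $f(x)-\beta x_1$ (coercivity coming from strong convexity via \eqref{eq3}), use Lemma~\ref{T8} to see that both partial infima are attained at $x_1$ on $A^1_{\alpha,\beta}$, and subtract to express $x_1=(g_\alpha-g_\beta)/(\beta-\alpha)$ as a $(c-c)$-function of the remaining variables. No gaps; the final covering of $A$ via \eqref{eq7} is exactly as in the paper.
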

\begin{proof}
Without loss of generality we may assume that $i=1$. 
For any $s\in\bbbr$, the function $f_s(x)=f(x)-sx_1$ is convex and coercive by \eqref{eq3}. Therefore, Lemma~\ref{T5} implies that the function
$$
g_s:\bbbr^{n-1}\to\bbbr,
\quad
g_s(x_2,\ldots,x_n):=\inf_{x_1\in\bbbr} f_s(x_1,x_2,\ldots,x_n)
$$
is convex. If $a\in A^1_{\alpha,\beta}$, then \eqref{eq13} yields
$$
f(a+te_1)\geq f(a)+\alpha t
\quad
\text{for all $t\in\bbbr$}
$$
or equivalently,
$$
f_\alpha(a+te_1)\geq f_\alpha(a)
\quad
\text{for all $t\in\bbbr$.}
$$
The last condition however, means that the function $x_1\mapsto f_\alpha(x_1,a_2,\ldots,a_n)$ attains minimum at $x_1=a_1$ so 
\begin{equation}
\label{eq9}
g_\alpha(a_2,\ldots,a_n)=f_\alpha(a_1,a_2,\ldots, a_n)=f(a_1,a_2,\ldots,a_n)-\alpha a_1.
\end{equation}
Similarly, inequality 
$$
f(a+te_1)\geq f(a)+\beta t
\quad
\text{for all $t\in\bbbr$}
$$
(also guaranteed by \eqref{eq13}) implies that
\begin{equation}
\label{eq10}
g_\beta(a_2,\ldots,a_n)=f(a_1,a_2,\ldots,a_n)-\beta a_1.
\end{equation}
Now \eqref{eq9} and \eqref{eq10} yield
$$
a_1=\frac{1}{\beta-\alpha}(g_\alpha(a_2,\ldots,a_n)-g_\beta(a_2,\ldots,a_n))
$$
and $A^1_{\alpha,\beta}$ is contained in the graph of the $(c-c)$-function
\begin{equation}
\label{eq11}
g(x_2,\ldots,x_n)=\frac{1}{\beta-\alpha}(g_\alpha(x_2,\ldots,x_n)-g_\beta(x_2,\ldots,x_n)).
\end{equation}
The proof is complete.
\end{proof}
\begin{remark}
Note that in general not all points of the graph of \eqref{eq11} belong to the set $A^1_{\alpha,\beta}$
as otherwise we would obtain a whole surface of points of discontinuity of the derivative and $f(x)=|x|$ has only one such point.
\end{remark}

\section{Proof of Theorem~\ref{T1}}
\label{trzy}

\begin{proof}[Proof of (a)]
In the proof we will need the following two lemmata. 
Erd\"os~\cite{erdos2}, proved that $|\bbbr^n\setminus\unp(E)|=0$, but Lemma~\ref{T2} provides a different proof of this fact, see
\cite[Proposition~2.4]{clarke}, \cite[Lemma~2.1]{CF}, \cite[Theorem~3.3]{EH}, \cite[Lemma~2.21]{BD}.
\begin{lemma}
\label{T2}
If a set $\varnothing\neq E\subset\bbbr^n$ is closed, and
the distance function $d(x)=\dist(x, E)$ is differentiable at $x\in\bbbr^n\setminus E$, then $x\in \unp(E)$.
\end{lemma}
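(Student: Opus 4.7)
My plan is to show that if $d$ is differentiable at $x$, then any point of $E$ realizing the distance must lie in a specific direction from $x$ determined by $\nabla d(x)$, so the nearest point is unique.

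First, since $E$ is closed and non-empty, a standard compactness argument (taking a minimizing sequence $y_k \in E$ with $|x-y_k|\to d(x)$, which is bounded and hence has a limit point in the closed set $E$) produces at least one nearest point $y\in E$ with $|x-y|=d(x)>0$.

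The next step is to compute the directional derivative of $d$ in the direction $v:=(y-x)/|y-x|$. For $0\leq t\leq d(x)$, the point $x+tv$ lies on the segment $[x,y]$, so
\[
d(x+tv)\leq |x+tv-y|=d(x)-t.
\]
On the other hand, $d$ is $1$-Lipschitz, so $d(x+tv)\geq d(x)-t$. Hence $d(x+tv)=d(x)-t$ for such $t$, which yields the directional derivative
\[
\langle\nabla d(x),v\rangle=\lim_{t\to 0^+}\frac{d(x+tv)-d(x)}{t}=-1.
\]
The $1$-Lipschitz property also gives $|\nabla d(x)|\leq 1$, while $|v|=1$. Thus the Cauchy--Schwarz inequality $\langle\nabla d(x),v\rangle\geq -|\nabla d(x)|\,|v|\geq -1$ is attained as an equality, which forces $|\nabla d(x)|=1$ and
\[
v=-\nabla d(x).
\]

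Consequently, every nearest point $y$ to $x$ must satisfy
\[
y=x+d(x)\,v=x-d(x)\,\nabla d(x),
\]
and this expression depends only on $x$ and $d(x)$, not on the choice of $y$. Therefore the nearest point is unique and $x\in\unp(E)$. The only mild subtlety is the Cauchy--Schwarz equality case, which pins down the direction from $x$ to the nearest point; everything else is routine.
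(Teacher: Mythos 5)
Your proof is correct and follows essentially the same argument as the paper: both show that $d$ decreases with slope $1$ along the segment from $x$ to a nearest point, deduce $\langle\nabla d(x),v\rangle=-1$, and use $|\nabla d(x)|\leq 1$ (equality in Cauchy--Schwarz) to conclude $\nabla d(x)=-v$, so that every nearest point equals $x-d(x)\nabla d(x)$ and is therefore unique.
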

\begin{remark}
\label{rem2}
Since $d$ is Lipschitz continuous, it is differentiable a.e. by the Rademacher theorem, and hence 
Lemma~\ref{T2} yields $|\bbbr^n\setminus\unp(E)|=0$. 
\end{remark}
\begin{proof}
Assume that $d$ is differentiable at $x\in\bbbr^n\setminus E$ and $p\in E$ is such that $|x-p|=d(x)$. We will prove that $p=x-d(x)\nabla d(x)$ and this will imply uniqueness of $p$.

It follows from the triangle inequality that if $y$ belongs to the interval with endpoints $x$ and $p$, $y\in [x,p]$, then $d(y)=|y-p|$. Therefore, the function $d$ decreases linearly with the slope $1$ along the segment $[x,p]$. Since $d$ is differentiable at $x$, the directional derivative at $x$ in the direction of the unit vector $v=(p-x)/|p-x|$ satisfies
\begin{equation}
\label{eq1}
\langle\nabla d(x), v\rangle= D_vd(x)=-1.
\end{equation}
On the other hand, $d$ is $1$-Lipschitz so $|\nabla d(x)|\leq 1$ and hence equality \eqref{eq1} implies that
$$
\nabla d(x)=-v=\frac{x-p}{d(x)}
\quad
\text{so}
\quad
p=x-d(x)\nabla d(x).
$$
\end{proof}
The next lemma was observed by Asplund \cite[p.\ 235]{asplund}.
\begin{lemma}
\label{T3}
If $\varnothing\neq E\subset\bbbr^n$ is closed, and $d(x)=\dist(x,E)$, then
the function $f:\bbbr^n\to\bbbr$ defined by $f(x)=|x|^2-d(x)^2$ is convex.
\end{lemma}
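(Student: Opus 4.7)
The plan is to exhibit $f$ as a supremum of affine functions, which is the standard way to verify convexity without having to manipulate second derivatives.

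First I would fix an arbitrary $e \in E$ and expand
\begin{equation*}
|x-e|^2 = |x|^2 - 2\langle x,e\rangle + |e|^2,
\end{equation*}
so that
\begin{equation*}
|x|^2 - |x-e|^2 = 2\langle x,e\rangle - |e|^2.
\end{equation*}
The right-hand side is an affine function of $x$ (with $e$ fixed), which is in particular convex.

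Next I would take the infimum over $e \in E$ on the defining identity $d(x)^2 = \inf_{e \in E} |x-e|^2$, so that
\begin{equation*}
f(x) = |x|^2 - d(x)^2 = |x|^2 - \inf_{e \in E} |x-e|^2 = \sup_{e \in E}\bigl(|x|^2 - |x-e|^2\bigr) = \sup_{e \in E}\bigl(2\langle x,e\rangle - |e|^2\bigr).
\end{equation*}
Thus $f$ is the pointwise supremum of a family of affine functions indexed by $E$, and is therefore convex as a function $\mathbb{R}^n \to \mathbb{R}$. (It is finite everywhere because $f(x) = |x|^2 - d(x)^2 \geq 2\langle x,e_0\rangle - |e_0|^2$ for any fixed $e_0 \in E$, and the upper bound $f(x) \leq |x|^2$ is immediate from $d(x) \geq 0$.)

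There is no real obstacle here; the only step that requires a moment's care is the sign flip when exchanging $\inf$ with $\sup$, and the verification that the supremum is finite (which uses $E \neq \varnothing$). The convexity of a supremum of affine functions is a standard fact and does not need to be reproved.
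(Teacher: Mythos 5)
Your proof is correct and follows essentially the same route as the paper: rewriting $f(x)=|x|^2-\inf_{e\in E}|x-e|^2=\sup_{e\in E}\bigl(2\langle x,e\rangle-|e|^2\bigr)$ and invoking convexity of a supremum of affine functions. The extra remark on finiteness of the supremum (using $E\neq\varnothing$) is a harmless addition that the paper leaves implicit.
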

\begin{proof}
We have
$$
f(x)=|x|^2-\inf_{y\in E}|x-y|^2=|x|^2+\sup_{y\in E}(-|x-y|^2)=\sup_{y\in E} \big(2\langle x,y\rangle -|y|^2\big).
$$
Therefore,
$f$ is a supremum of a family of affine functions,
and hence it is convex.
\end{proof}

By Lemma~\ref{T2} the set $\bbbr^n\setminus\unp(E)$ is contained in the set 
$A_+$ of points where $d$ is strictly positive and non-differentiable. Since $d>0$ in $A_+$, the set $A_+$ is contained in the set where the convex function $f(x)=|x|^2-d(x)^2$ is non-differentiable and the result follows from Theorem~\ref{T10}.
\end{proof}

\begin{proof}[Proof of (b)]
It is well known that if $f:\bbbr^{n-1}\to\bbbr$ is convex, then for any $\eps>0$ there is a function $f_\eps\in C^2(\bbbr^{n-1})$ such that
the Lebesgue measure of the set where the two functions differ satisfies 
$|\{x:\, f(x)\neq f_\eps(x)\}|<\eps$. (In general,
the function $f_\eps\in C^2$ cannot be convex, see Example~1.9 and Proposition~1.10 in \cite{azagrah}.)
This is a consequence of the Aleksandrov theorem \cite{Alexandroff} about second order differentiability of convex functions which implies that a convex function satisfies assumptions of the $C^2$-Whitney extension theorem outside a set of measure less than $\eps$. 
While the idea is simple, the details are rather difficult, see \cite{Alberti}, \cite[Corollary~1.5]{azagrah}, \cite[Proposition~A1]{EvansGangbo}, \cite{Imomkulov}. 
This and part (a) easily imply that the set $\bbbr^n\setminus\unp(E)$ can be covered up to a set of $\cH^{n-1}$-measure zero by a sequence of graphs of $C^2$-functions.
One only needs to note that any $(c-c)$-function, $g:\bbbr^{n-1}\to\bbbr$ is locally Lipschitz and hence for a set $A\subset\bbbr^{n-1}$ of measure zero, the corresponding set on the graph of $g$ has vanishing Hausdorff measure $\cH^{n-1}$.

Therefore, it remains to show that if $f\in C^2$, then on every bounded set, $f$ coincides with the difference of two convex functions of class $C^{2}$. 

Given $R>0$, let $\varphi$ be a compactly supported smooth function that equals $1$ for $|x|\leq R$. Then $\varphi f\in C^2$ has bounded second order derivatives so there is $C_R>0$ such that the matrix $D^2(\varphi(x) f(x)+ C_R|x|^2)=D^2(\varphi f)+2C_R I$ is positive definite and hence 
the function $\varphi(x) f(x)+ C_R|x|^2$ is convex and of class $C^{2}$.  Now,
$f(x)=(\varphi(x)f(x)+C_R |x|^2)-C_R|x|^2$, $|x|\leq R$, represents $f$ for $|x|\leq R$ as a difference of convex functions of class $C^{2}(\bbbr^n)$.
This completes the proof.
\end{proof}

\noindent
{\bf Acknowledgement.} The author would like to thank Daniel Azagra and anonymous referee for helpful comments.


\begin{thebibliography}{99}



\bibitem{ac}
{\sc Albano, P., Cannarsa, P.:}
Structural properties of singularities of semiconcave functions.
{\em Ann.\ Scuola Norm.\ Sup.\ Pisa Cl.\ Sci.} (4) 28 (1999), 719--740.

\bibitem{Alberti}
{\sc Alberti, G.:} On the structure of singular sets of convex functions,
{\em Calc.\ Var.\ Partial Differential Equations} 2 (1994), 17--27. 

\bibitem{AA}
{\sc Alberti, G., Ambrosio, L.:}  A geometrical approach to monotone functions in $\bbbr^n$, 
{\em Math.\ Z.} 230 (1999), 259--316. 

\bibitem{Alexandroff}
{\sc Alexandroff, A. D.:} Almost everywhere existence of the second differential of a convex function and some properties of convex surfaces connected with it. (Russian)
Leningrad State Univ. Annals [Uchenye Zapiski] Math. Ser. 6, (1939). 3--35. 


\bibitem{anzellottiS}
{\sc Anzellotti, G., Serapioni, R.:}
$\mathscr{C}^k$-rectifiable sets.
{\em J.\ Reine Angew.\ Math.} 453 (1994), 1--20.

\bibitem{asplund}
{\sc Asplund, E.:}
\v{C}eby\v{s}ev sets in Hilbert space.
{\em Trans.\ Amer.\ Math.\ Soc.} 144 (1969), 235--240.

\bibitem{azagrah}
{\sc Azagra, D., Haj\l{}asz, P.:} Lusin-type properties of convex functions and convex bodies.
{\em J. Geom.\ Anal.} (2021). https://doi.org/10.1007/s12220-021-00696-z

\bibitem{BD}
{\sc Birbrair, L. Denkowski, M. P.:} Medial axis and singularities. {\em J. Geom.\ Anal.} 27 (2017),  2339--2380. 

\bibitem{BH}
{\sc Bishop, C. J., Hakobyan, H.:}
A central set of dimension 2.
{\em Proc.\ Amer.\ Math.\ Soc.} 136 (2008),  2453--2461.

\bibitem{CF}
{\sc Caffarelli, L. A., Friedman, A.:} The free boundary for elastic-plastic torsion problems. {\em Trans.\ Amer.\ Math.\ Soc}. 252 (1979), 65--97.

\bibitem{clarke}
{\sc Clarke, F. H.:} Generalized gradients and applications. 
{\em Trans.\ Amer.\ Math.\ Soc.} 205 (1975), 247--262. 

\bibitem{erdos}
{\sc  Erd\"os, P.:}
On the Hausdorff dimension of some sets in Euclidean space. 
{\em Bull.\ Amer.\ Math.\ Soc.} 52 (1946), 107--109.

\bibitem{erdos2}
{\sc  Erd\"os, P.:}
Some remarks on the measurability of certain sets. 
{\em Bull.\ Amer.\ Math.\ Soc.} 51 (1945), 728--731.

\bibitem{EvansGangbo}
{\sc Evans, L. C., Gangbo, W.:} Differential equations methods for the Monge-Kantorovich mass transfer problem. {\em Mem.\ Amer.\ Math.\ Soc.} 137 (1999), no. 653.

\bibitem{EH}
{\sc Evans, W. D., Harris, D. J.:}
Sobolev embeddings for generalized ridged domains.
{\em Proc.\ London Math.\ Soc.} 54 (1987),  141--175.


\bibitem{fremlin}
{\sc Fremlin, D. H.:}
Skeletons and central sets.
{\em Proc.\ London Math.\ Soc.}  74 (1997),  701--720.

\bibitem{Imomkulov}
{\sc Imomkulov, S. A.:} Twice differentiability of subharmonic
  functions. (Russian) {\em Izv.\ Ross.\ Akad.\ Nauk Ser.\ Mat.} 56 (1992),
877--888; translation in {\em Russian Acad.\ Sci.\ Izv.\ Math.} 41 (1993),
157--167.

\bibitem{mat}
{\sc Matszangosz, Á. K.:}
The Denjoy-Young-Saks theorem in higher dimensions: a survey.
{\em Real Anal.\ Exchange} 40 (2014/15), 1--36.

\bibitem{RV}
{\sc Roberts, A. W., Varberg, D. E.:} 
{\em Convex functions.}
Pure and Applied Mathematics, Vol. 57. Academic Press, New York-London, 1973.

\bibitem{roger}
{\sc Roger, F.:} Sur quelques applications m\'etriques de la notion de contingent
bilat\'eral, {\em C. R. Math. Acad. Sci. Paris}, 201 (1935), 28--30.

\bibitem{saks}
{\sc Saks, S.:}
{\em Theory of the integral.}
Second revised edition. English translation by L. C. Young. With two additional notes by Stefan Banach. Dover Publications, Inc., New York 1964.

\bibitem{zajicek}
{\sc Zaj\'{\i}\v{c}ek, L.:}
On the differentiation of convex functions in finite and infinite dimensional spaces.
{\em Czechoslovak Math.\ J.} 29(104) (1979),  340--348.

\bibitem{zam}
{\sc Zamfirescu, T.:}
The nearest point mapping is single valued nearly everywhere.
{\em Arch.\ Math.\ (Basel)} 54 (1990), 563--566.

\bibitem{zhivkov}
{\sc Zhivkov, N. V.:} 
Compacta with dense ambiguous loci of metric projections and antiprojections. 
{\em Proc.\ Amer.\ Math.\ Soc.} 123 (1995), 3403--3411.

\end{thebibliography}
\end{document}